\begin{document}
\title{Closures of $K$-orbits in the flag variety for $SU^*(2n)$}
\author{William M. McGovern}
\subjclass{22E47,57S25}
\keywords{flag variety, pattern avoidance, rational smoothness}
\begin{abstract}
We characterize the $Sp_{2n}$-orbits in the flag variety for $SL_{2n}$ with rationally smooth closure via a pattern avoidance criterion, also showing that the singular and rationally singular loci of such orbit closures coincide.
\end{abstract}
\maketitle

\section{Introduction}
Let $G$ be a complex reductive group with Borel subgroup $B$ and let $K=G^\theta$ be the fixed point subgroup of an involution of $G$.  In this paper we continue the program begun in \cite{M07} and continued in \cite{MT08}, using pattern avoidance to characterize the $K$-orbits in $G/B$ with rationally smooth closure (as in \cite{LS90}).  Here we consider the case $G = SL(2n,\mathbb C), K = Sp(2n,\mathbb C)$.  We will adapt the techniques used in \cite{B98} to study Schubert varieties for complex classical groups, focussing on the poset and graph structures of the set of orbits with closures contained in a given one.

I would like to thank Axel Hultman for many very helpful email messages and Michael Kiyo for pointing out an error in an earlier version of this paper.

\section{Preliminaries}
Set $G = SL(2n,\mathbb C), K = Sp(2n,\mathbb C)$.  Let $B$ be the subgroup of upper triangular matrices in $G$.  The quotient $G/B$ may be identified with the variety of complete flags $V_0\subset V_1\subset\cdots\subset V_{2n}$ in $\mathbb C^{2n}$.  The group $K$ acts on this variety with finitely many orbits; these are parametrized by the set $I_{2n}$ of involutions in the symmetric group $S_{2n}$ without fixed points \cite{MO88,RS90}.  In more detail, let $\langle\cdot,\cdot\rangle$ be the standard nondegenerate skew form on $\mathbb C^{2n}$ with isometry group $K$.  Then a flag $V_0\subset\cdots\subset V_{2n}$ lies in the orbit $\mathcal O_\pi$ corresponding to the involution $\pi$ if and only if the rank of $\langle\cdot,\cdot\rangle$ on $V_i\times V_j$ equals the cardinality
$\#\{k: 1\le k\le i, \pi(k)\le j\}$ for all $1\le i,j\le 2n$.  

We will be using a modified version of the usual notion of pattern avoidance for permutations.  We say that $\pi=\pi_1\ldots\pi_{2n}$ (in one-line notation) includes the pattern $\mu=\mu_1\ldots\mu_{2m}$ if there are indices $i_1< i_2 <\cdots< i_{2m}$ permuted by $\pi$ such that $\pi_{i_j}>\pi_{i_k}$ if and only if $\mu_j>\mu_k$ (the usual definition would omit the condition that $\pi$ permute the $i_j$).  We say that $\pi$ avoids $\mu$ if it does not include it.  For example, the involution 47513826 includes the pattern 351624:  the indices $1,2,4,6,7,8$ are permuted by the involution and the first and third, fourth and sixth, and second and fifth of these are flipped.  On the other hand, the involution 65872143 does not include the pattern 2143, for although the indices $2,1,4,3$ occur in that order in the involution they are not permuted by it.

There are well-known poset- and graph-theoretic criteria for rational smoothness of complex Schubert varieties due to Carrell and Peterson \cite{C94}.  These have been extended by Hultman to our setting (or more generally to $K$-orbits in any flag variety $G/B$ where the symmetric pair $(G,K)$ corresponds to a real form $G_0$ of the reductive group $G$ whose Cartan subgroups form a single conjugacy class \cite{H09}).  To state them we first recall that the standard partial order on $K$-orbits, by containment of their closures, corresponds to the (restriction of the) reverse Bruhat order on $I_{2n}$ \cite{RS90}.  The poset $I_{2n}$ equipped with this order is then graded via the rank function
$$
r(\pi) =n^2- \sum_{i<\pi(i)} (\pi(i) - i - \#\{k\in\mathbb N: i<k<\pi(i), \pi(k)<i\})
$$
\noindent where this quantity equals the difference in dimension between $\mathcal O_\pi$ and $\mathcal O_c$, the unique closed orbit, corresponding to the involution $w_0=2n\ldots1$ \cite{RS90}.  Let $I_\pi$ be the interval consisting of all $\pi'\le\pi$ in the reverse Bruhat order.  Then Hultman has shown that $\bar{\mathcal O}_\pi$ is rationally smooth if and only if $I_\pi$ is rank-symmetric in the sense of having the same number of elements of rank $i$ as of rank $r-i$ for all $i$, where $r$ is the rank of $\pi$; equivalently, if and only if the rank generating function $P_\pi = \sum_{\pi'\le\pi} q^{r(\pi')}$ is palindromic \cite[5.9]{H09}.  If we make $I_\pi$ into a graph $BG_\pi$ by decreeing that the vertices $\mu$ and $\nu$ are adjacent if and only if $\nu = t\mu t\ne\mu$ for some transposition $t$ in $S_{2n}$, then $\bar{\mathcal O}_\pi$ is rationally smooth if and only $BG_\pi$ is regular of degree $r$.  If $\mu<\pi$ and we make the reverse Bruhat interval $[\mu,\pi]$ into a graph $BG_{\mu,\pi}$ by the same recipe, then $\bar{\mathcal O}_\pi$ is rationally smooth at $\mathcal O_\mu$ if and only if the degree of $\mu$ in $BG_{\mu,\pi}$ is $r(\pi) - r(\mu)$ \cite[4.5,5.8,6.7]{H09} (but in general $BG_{\mu,\pi}$ need not be regular or rank-symmetric in this situation).  In general the degree of $\mu$ in $BG_{\mu,\pi}$ is always at least $r:=r(\pi) - r(\mu)$.  We call $\mu$ an irregular vertex if it has larger degree than $r$.

\section{Main result}
We begin with a lemma about the inductive behavior of vertex degrees in Bruhat graphs.

\newtheorem*{lemma}{Lemma}
\begin{lemma}
Let $\mu,\pi$ be two involutions in $I_{2n}$ with $\mu\le\pi$ in reverse Bruhat order.  Let $t$ be a transposition of two indices flipped by both $\mu$ and $\pi$ and set $\pi=\tilde\pi t, \mu=\tilde\mu t$ (so that $\tilde\pi,\tilde\mu$ are {\bf not} in $I_{2n}$).  Let $\pi',\mu'$ be the unique involutions in $I_{2n-2}$ such that $\tilde\pi,\tilde\mu$ include the patterns $\pi',\mu'$, respectively, in the indices fixed by $t$.  Assume that the vertex $\mu'$ is irregular in $BG_{\mu',\pi'}$.  Then $\mu$ is irregular in $BG_{\mu,\pi}$.
\end{lemma}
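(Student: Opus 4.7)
The plan is to define an insertion map $\phi\colon I_{2n-2}\to I_{2n}$ that sends $\mu'\mapsto\mu$ and $\pi'\mapsto\pi$, use it to lift edges of $BG_{\mu',\pi'}$ at $\mu'$ to edges of $BG_{\mu,\pi}$ at $\mu$, and then supplement with additional ``cross-edges'' at $\mu$ involving the indices moved by $t$ in order to absorb the discrepancy between the two rank differences.

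For $\sigma'\in I_{2n-2}$ let $\phi(\sigma')\in I_{2n}$ be the involution that acts as $t$ on the pair $\{a,b\}$ moved by $t$ and as $\sigma'$ on the complement via the unique order-preserving identification of $\{1,\ldots,2n-2\}$ with $\{1,\ldots,2n\}\setminus\{a,b\}$. A direct computation with the Richardson--Springer rank formula yields
$$r(\phi(\sigma'))-r(\sigma')=2n-1-(b-a)-2S(\sigma'),$$
where $S(\sigma')$ counts the arcs of $\sigma'$ whose endpoints (under the identification) straddle $\{a,b\}$. Subtracting this identity at $\sigma'=\mu'$ from its value at $\sigma'=\pi'$ gives
$$r(\pi)-r(\mu)=\bigl(r(\pi')-r(\mu')\bigr)+2\Delta,\qquad \Delta:=S(\mu)-S(\pi)\ge 0,$$
the nonnegativity $\Delta\ge 0$ following from the fact that $\mu\le\pi$ in reverse Bruhat forces $\mu$ to have at least as many arcs spanning over $(a,b)$ as $\pi$ does. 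A check on covering relations shows that $\phi$ is order-preserving, so it embeds $[\mu',\pi']$ into $[\mu,\pi]$.

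Every transposition $s'\in S_{2n-2}$ corresponds under the identification to a transposition $s\in S_{2n}$ fixing $\{a,b\}$ pointwise, and one verifies $s\mu s=\phi(s'\mu's')$. Consequently every edge $\mu'\sim\nu'$ of $BG_{\mu',\pi'}$ lifts to an edge $\mu\sim\phi(\nu')$ of $BG_{\mu,\pi}$, yielding at least $\deg_{BG_{\mu',\pi'}}(\mu')$ neighbors of $\mu$ that still flip $(a,b)$. When $\Delta=0$ this suffices: $\deg_{BG_{\mu,\pi}}(\mu)\ge\deg_{BG_{\mu',\pi'}}(\mu')>r(\pi')-r(\mu')=r(\pi)-r(\mu)$, so $\mu$ is irregular.

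When $\Delta>0$ we must produce $2\Delta$ further edges at $\mu$ not coming from the lift. For each arc $(c,d)$ of $\mu$ with $c<a<b<d$, the four cross-transpositions $(a,c),(b,c),(a,d),(b,d)$ produce two distinct cross-neighbors of $\mu$, neither of which flips $(a,b)$ nor lies in $\phi([\mu',\pi'])$. The main obstacle is verifying that at least $\Delta$ of the $S(\mu)$ spanning arcs of $\mu$ contribute both associated cross-neighbors inside $[\mu,\pi]$; equivalently, that the spanning arcs of $\pi$ account for precisely those cross-neighbors that fall outside the interval. This bookkeeping should be carried out using the standard criteria for Bruhat order on $S_{2n}$ together with the restriction to involutions, and is the most delicate part of the argument. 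Once the $\ge 2\Delta$ admissible cross-edges are in hand, combining them with the lifted ones produces $\deg_{BG_{\mu,\pi}}(\mu)>r(\pi)-r(\mu)$, proving the irregularity of $\mu$.
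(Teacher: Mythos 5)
Your proposal follows essentially the same route as the paper's proof, repackaged in slightly different language. Your insertion map $\phi\colon I_{2n-2}\to I_{2n}$ and the identity $r(\phi(\sigma'))-r(\sigma')=2n-1-(b-a)-2S(\sigma')$ recover, upon subtraction, exactly the paper's rank-difference formula $r(\pi)-r(\mu)=r(\pi')-r(\mu')+2\bigl(n(\mu)-n(\pi)\bigr)$, with your $S(\sigma')$ equal to the paper's count $n(\cdot)$ of arcs encapsulating $t$ and your $\Delta = n(\mu)-n(\pi)$. Lifting each edge at $\mu'$ to an edge at $\mu$ via $\phi$, and then adding two cross-edges per excess encapsulating arc, is precisely the paper's argument. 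Packaging the rank computation through $\phi$ and recording explicitly that $\Delta\ge 0$ is needed are mild improvements in exposition.

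The step you flag as ``the most delicate part of the argument'' --- checking that, for each of $\Delta$ appropriate arcs $(c,d)$ of $\mu$ nesting around $t=(a,b)$, both conjugates $(a\,c)\mu(a\,c)=(b\,d)\mu(b\,d)$ and $(a\,d)\mu(a\,d)=(b\,c)\mu(b\,c)$ actually lie in the interval $[\mu,\pi]$, are distinct from each other and from all lifted neighbors $\phi(\nu')$, and that the lift $\phi$ is order-preserving so that the lifted edges land in $[\mu,\pi]$ --- is exactly what the paper compresses into ``one easily locates two additional edges.'' So you have correctly identified where the real work lies, and your sketch is no less complete than the published one, but be aware that this bookkeeping is genuinely the crux: without it the count of extra edges could fall short of $2\Delta$. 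If you intend to turn this into a full proof, that verification (together with the nonnegativity $\Delta\ge 0$, which you assert but do not prove) is what must be supplied.
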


\begin{proof}
Note first that the one-line notation of $\pi'$, for example, is obtained from that of $\pi$ by deleting the indices flipped by $t$ and then replacing the $i$th smallest of the surviving indices by $i$.  Thus if $\pi = 361542$ and $t$ flips 1 and 3, then $\pi' = 4321$.  We say that the transposition $(a,d)$ flipping the indices $a$ and $d$ with $a<d$ encapsulates the transposition $(b,c)$ with $b<c$ if $a<b<c<d$.  Then the rank difference $r(\pi) - r(\mu)$ is given by $r(\pi') - r(\mu') + 2(n(\mu) - n(\pi))$, where $n(\mu),n(\pi)$ are the numbers of transpositions in $\mu,\pi$, respectively, encapsulating $t$.  Now every edge from $\mu'$ in $BG_{\mu',\pi'}$ corresponds to an edge from $\mu$ in $BG_{\mu,\pi}$ in an obvious way.   For every transposition counted by $n(\mu)$ but not $n(\pi)$ one easily locates two additional edges from $\mu$ in $BG_{\mu,\pi}$, showing that $\mu$ is irregular whenever $\mu'$ is, as desired.
\end{proof}

Now we can characterize the $K$-orbits with rationally smooth closure.

\newtheorem{theorem}{Theorem}
\begin{theorem}
The orbit $\mathcal O_\pi$ has rationally smooth closure if and only if $\pi$ avoids the 17 patterns
$351624,64827153,57681324,53281764,43218765,65872143,21654387,\linebreak 21563487,34127856,43217856,34128765,36154287,21754836,63287154,54821763,\linebreak 46513287,21768435$.
\end{theorem}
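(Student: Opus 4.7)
The plan is to split the argument into necessity and sufficiency, using the Lemma as the primary engine for the former.

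\textbf{Necessity.} First, for each of the 17 listed patterns $\sigma\in I_{2m}$ I would verify by direct computation---examining the interval $I_\sigma$ and computing either the rank generating function $P_\sigma$ or the degrees of vertices in $BG_\sigma$---that $\bar{\mathcal O}_\sigma$ is not rationally smooth, recording in each case an explicit irregular vertex $\mu_\sigma\le\sigma$. Now suppose $\pi\in I_{2n}$ includes some bad pattern $\sigma$ at indices $i_1<\cdots<i_{2m}$ which are permuted by $\pi$; the remaining $2n-2m$ indices are paired off by $\pi$ into transpositions $t_1,\ldots,t_{n-m}$. Define $\mu\in I_{2n}$ to agree with $\pi$ on the complement of $\{i_1,\ldots,i_{2m}\}$ and to have pattern $\mu_\sigma$ on the $i_j$. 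Then $\mu\le\pi$ in reverse Bruhat order, and $\mu,\pi$ both flip each $t_j$; applying the Lemma successively---removing $t_1,t_2,\ldots,t_{n-m}$ one at a time---lifts the irregularity of $\mu_\sigma$ in $BG_{\mu_\sigma,\sigma}$ to irregularity of $\mu$ in $BG_{\mu,\pi}$, so $\bar{\mathcal O}_\pi$ is not rationally smooth.

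\textbf{Sufficiency.} I would proceed by induction on $n$, verifying the small cases (up through the first $n$ in which all the length-$8$ patterns can appear) directly via Hultman's palindromicity criterion. For the inductive step, assume $\pi\in I_{2n}$ avoids all 17 patterns, and seek a ``peripheral'' transposition $t$ flipped by $\pi$ whose deletion yields $\pi'\in I_{2n-2}$ still avoiding every pattern, and for which the reverse Bruhat interval $I_\pi$ decomposes relative to $I_{\pi'}$ in a way compatible with palindromicity (for instance so that $P_\pi=(1+q+\cdots+q^k)P_{\pi'}$ for an appropriate $k$ determined by the encapsulation count of $t$ in $\pi$, which is the same quantity governing rank differences in the Lemma). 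Alternatively, one can argue directly that every $\mu<\pi$ has degree in $BG_{\mu,\pi}$ exactly $r(\pi)-r(\mu)$ by pushing the Lemma in the reverse direction, using that regularity of all the smaller intervals forces regularity of the whole.

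\textbf{Main obstacle.} The necessity argument is largely routine once the irregular vertices of the 17 minimal obstructions are identified. The sufficiency direction is the real work: one must carry out a structural case analysis of pattern-avoiding $\pi$ to produce a peripheral transposition whose deletion preserves avoidance, while simultaneously controlling the arithmetic of encapsulation counts so that the rank generating function stays palindromic. The mixing of length-$6$ and length-$8$ obstructions in the list, and patterns like $21654387$ or $21768435$ whose ``outer'' transpositions encapsulate several others, suggests that certain natural candidates for $t$ will destroy avoidance or mis-balance $P_\pi$; avoiding these failures is where the precise shape of the 17-pattern list must be used.
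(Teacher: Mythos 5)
Your necessity argument is essentially the paper's: the paper verifies by direct computation (a table of edge sets from the bottom vertex) that the bottom vertex of $BG_\sigma$ is irregular for each of the seventeen patterns $\sigma$, and then applies the Lemma once per transposition of $\pi$ outside the bad pattern to lift irregularity to $BG_{\mu,\pi}$, where $\mu$ is obtained from $\pi$ by rewriting the bad-pattern indices in decreasing order. Your version, which allows an arbitrary irregular vertex $\mu_\sigma$ in place of the bottom one, works as long as you also check that the resulting $\mu$ satisfies $\mu\le\pi$; the paper's choice makes this automatic. So for the ``only if'' direction you are on the same track.

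The ``if'' direction is where your proposal has a genuine gap. You correctly guess that the goal is a factorization $P_\pi=(1+q+\cdots+q^k)P_{\pi'}$, but you frame the inductive step as ``find a peripheral transposition $t$ to delete, so that $\pi'$ still avoids and the interval decomposes'' --- and this is not what the paper does, nor would it obviously work as stated, because deleting the transposition $(1,\pi_1)$ does not by itself give an interval decomposition of $I_\pi$. The paper instead introduces a chain $\pi^{(1)},\pi^{(2)},\ldots$ in which the index $1$ is conjugated one step to the right at a time (using transpositions $(\pi_1,\pi_1+1)$, etc.), and the crux is the claim that for any $\mu<\pi$ with $\mu_1>\pi_1$ one has $\mu\le\pi^{(1)}$ whenever $\pi$ avoids the seventeen patterns. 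Proving this requires a detailed case analysis on $\pi_1=k$ and $\pi_{k+1}=i$ that rules out several ``bad seeds'' ($465132$, $546213$, $532614$, $216543$, $432165$, $215634$, $341265$), and it is here that the precise shape of the list of seventeen patterns is used; one must also check separately that $\pi^{(1)}$ continues to avoid the patterns. Your proposal acknowledges that ``a structural case analysis'' is needed but does not actually identify the right intermediate objects $\pi^{(k)}$, the reduction $\mu_1=\pi_1\Rightarrow$ strip the pair $(1,\mu_1)$, or the bad-seed analysis that closes the argument. Moreover, your alternative idea of ``pushing the Lemma in the reverse direction'' to force regularity does not work: the Lemma is a one-way implication (irregularity of $\mu'$ in the smaller graph implies irregularity of $\mu$ in the larger), and nothing in its proof gives the converse, since the extra edges it locates may or may not be present when $\mu'$ is regular. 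As written, the sufficiency half of your proof remains an unexecuted plan.
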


\begin{proof}
Note first that this list of bad patterns is stable under the automorphism of the Dynkin diagram:  the first nine patterns are fixed by this automorphism while the next four pairs of patterns are interchanged.  Suppose first that $\pi$ coincides with one of the bad patterns.  Then the bottom vertex in $BG_\pi$ is irregular, as one sees from the following table.  Here the rank of each bad pattern (regarded as an element of $I_6$ or $I_8$) is given in the middle column and the transpositions labelling the edges from the bottom vertex are given in the right column, abbreviating the flip of the $i$th and $j$th coordinates by $ij$.
\vskip .3in
\begin{tabular}{| c | c | c |}\hline
vertex & rank & edges from bottom vertex\\
\hline
351624 & 4 & 12,13,14,23,24\\
\hline
64827153 & 5 & 12,13,23,24,25,34,35\\ \hline
57681324 & 5 & 12,13,14,23,24,34\\ \hline
53281764 & 7 & 12,13,14,23,24,25,34,35\\ \hline
43218765 & 8 & 12,13,14,15,23,24,25,26,34,35\\ \hline
65872143 & 4 & 12,13,23,24,34\\ \hline
21654387 & 10 & 12,13,14,15,16,17,23,24,25,26,34,35\\ \hline
21563487 & 11 & 12,13,14,15,16,17,23,24,25,26,34,35\\ \hline
34127856 & 10 & 12,13,14,15,16,23,24,25,26,34,35\\ \hline
43217856 & 9 & 12,13,14,15,23,24,25,26,34,35\\ \hline
34128765 & 9 & 12,13,14,15,16,23,24,25,26,34,35\\ \hline
36154287 & 9 & 12,13,14,15,16,23,24,25,26,34,35\\ \hline
21754836 & 9 & 12,13,14,15,16,23,24,25,26,34,35\\ \hline
63287154 & 6 & 12,13,23,24,25,34,35\\ \hline
54821763 & 6 & 12,13,23,24,25,34,35\\ \hline
46513287 & 8 & 12,13,14,15,23,24,25,34,35\\ \hline
21768435 & 8 & 12,13,14,15,23,24,25,34,35\\  \hline
\end{tabular}
\vskip .3in

 Now if $\pi$ contains a bad pattern, then repeated use of Lemma 1 shows that $BG_{\mu,\pi}$ is irregular at $\mu$, where the one-line notation of $\mu$ is obtained from that of $\pi$ by rewriting the indices in the bad pattern in decreasing order and leaving the other indices unchanged.
Conversely, suppose that $\pi$ avoids all patterns in the above list.  We will show that the rank generating polynomial $P_\pi$ is palindromic, or more precisely it is the product of sums of the form
$1+q+\cdots +q^t$ for various $t$.  Let $\pi = \pi_1\ldots\pi_{2n}$ and assume first that $2n - \pi_1\le\pi_{2n} - 1$ (i.e., 1 is closer to the end of $\pi_1\ldots\pi_{2n}$ than $2n$ is to the beginning).  Set $\pi^{(1)} = t\pi t$, where $t$ is the transposition interchanging $\pi_1$ and $\pi_1 + 1$, so that 1 appears one place further to the right in $\pi^{(1)}$ than in $\pi$.  Define $\pi^{(2)},\ldots,\pi^{(2n-\pi_1)}$ similarly, so that 1 appears at the end of $\pi^{(2n-\pi_1)}$.  If $\mu=\mu_1,\ldots\mu_{2n} < \pi$ then Proctor's criterion for the Bruhat order \cite{P82} shows that $ \mu_1\ge\pi_1$.  If $\mu_1 = \pi_1$, then one checks that $\mu' < \pi'$, where $\mu',\pi'$ are obtained from $\mu,\pi$ by omitting the indices 1 and $\mu_1$, replacing all indices $i$ between 1 and $\mu_1$ by $i-1$, and replacing all indices $j>\mu_1$ by $j-2$; moreover, $\pi'$ continues to avoid all bad patterns.  If instead $\mu_1 > \pi_1$, then we claim that $\mu\le\pi^{(1)}$ and that $\pi^{(1)}$ continues to avoid all bad patterns. If this holds, then induction shows that $\mu\le\pi^{(\mu_1 - \pi_1)}$ whence we may as above eliminate the indices 1 and $\mu_1$ from $\mu$ and $\pi^{(\mu_1 - \pi_1)}$ and repeat the above procedure. Using the formula for the rank function in $I_{2n}$, we deduce that $P_\pi$ factors in the way claimed above, where the first factor is $1 + q + \cdots + q^{2n-\pi_1}$.  

To prove the claim, set $\pi_1 = k, \pi_{k+1} = i$ and suppose that there is $\mu$ with $\mu<\pi,\mu\not\le\pi^{(1)}$, and $\mu_1>\pi_1$.  There are two cases, according as $i<k$ or $i>k+1$.  If $i<k$ then we look at the indices greater than $k$ among $\pi_1,\ldots,\pi_k$.  If these do not occur in order of increasing size, then the pattern $p:=465132$ is included in $\pi$, in such a way that the 4 corresponds to $\pi_1  = k$.  The assumption $2n - \pi_1\le\pi_{2n} - 1$ implies that $\pi\ne p$, so that $\pi$ is the product of three transpositions forming the pattern $p$ and at least one more transposition.  Now one checks that no matter how one chooses this transposition to force $2n- \pi_1\le\pi_{2n} - 1$ we get a bad pattern in $\pi$, a contradiction; more precisely, one of the five patterns 46513287, 63287154, 65872143, 64827153, or 57681324 must occur in $\pi$.  If the indices greater than $k$ do occur in order of increasing size, then (since $\pi$ is an involution) the indices less than $k$ {\sl not} occurring among $\pi_1,\ldots,\pi_k$ are all larger than $i$, whence the indices $2,\ldots,i-1$ occur among $\pi_1,\ldots,\pi_{k-1}$ (and $\pi_k=1$).  These conditions are incompatible with $\mu<\pi$ and $\mu\not\le\pi^{(1)}$, so this case leads to a contradiction.  So we must have $i>k+1$.  Now if $\pi_j>k$ for any $j<i$ then one of the patterns 351624 or 361542 must occur in $\pi$:  the former is ruled out since it is a bad pattern and the latter, combined with the condition that $2n-\pi_1\le\pi_{2n} - 1$, would force one of the bad patterns 36154287 or 53281764 to occur in $\pi$ (arguing as in the case above where $\pi$ includes the pattern $p$).  So   $\pi_1, \ldots\pi_k$ must be a permutation of $1,\ldots,k$, whence $k$ must be even.  Now the absence of the patterns 43218765 and 43217856 in $\pi$ implies that $i=k+2$.  In this case the only way that we can have $\mu<\pi,\mu\not\le\pi^{(1)}$ is if exactly $k-1$ indices less than $k$ appear among $\mu_1,\ldots,\mu_k$, which is a contradiction since $\mu$ has no fixed points. 

If instead $\pi_{2n} - 1 < 2n - \pi_1$, then one repeats the above argument, replacing 1 by $2n$ and moving $2n$ to the left instead of 1 to the right.  Thus we define $\pi^{(1)},\pi^{(2)}$, and so on, so that $2n$ appears one place to the left in $\pi^{(1)}$ than it does in $\pi$; if $\mu\le\pi$ then we must have $\mu_{2n}\le\pi_{2n}$, and if $\mu_{2n}<\pi_{2n}$, then we must have $\mu\le\pi^{(1)}$, lest $\pi$ contain a bad pattern.  Here the two \lq\lq bad seeds" that must be ruled out are 546213 and 532614; these give rise to the bad patterns 21768435, 54821763, 65872143, 64827153, 57681324, 21754836, and 53281764.

Finally, we must ensure in both cases that $\pi^{(1)}$ avoids all bad patterns whenever $\pi$ does.   This requires that we rule out four more \lq\lq bad seeds", namely 216543, 432165, 215634, and 341265; we achieve this by ruling out the bad patterns 21654387, 43218765, 34127856, 43217856, 21563487, and 34128765.  Excluding also the bad pattern 351624 of length 6, we see that if $\pi$ avoids all bad patterns then $P_\pi$ factors in the desired way and $\mathcal O_\pi$ has rationally smooth closure, as required. 
\end{proof}

\section{Smoothness and the bottom vertex}
We now consider reverse Bruhat intervals $[\mu,\pi]$ and their graphs $BG_{\mu,\pi}$. We will find (as for Schubert varieties in type $A$) that it is only necessary to test one vertex in this graph to determine whether or not $\bar{\mathcal O}_\pi$ is (rationally) smooth at $\bar{\mathcal O}_\mu$.  

\newtheorem*{result}{Theorem 2}
\begin{result}
If $\mu<\pi$ and the degree of $\mu$ in $BG_{\mu,\pi}$ equals $r(\pi) - r(\mu)$, then $\bar{\mathcal O}_\pi$ is smooth along $\bar{\mathcal O}$. In particular, the singular and rationally singular loci of $\bar{\mathcal O}$ coincide.
\end{result}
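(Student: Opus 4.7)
The plan is to mirror the proof of the analogous result for Schubert varieties in type $A$, in which a single-vertex test in the Bruhat graph is enough to detect smoothness (not just rational smoothness). Since the hypothesis together with \cite[4.5,5.8,6.7]{H09} already yields that $\bar{\mathcal{O}}_\pi$ is rationally smooth at $\mathcal{O}_\mu$, the task reduces to upgrading rational smoothness to genuine smoothness along $\mathcal{O}_\mu$.

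The first step is to pick a point $x\in\mathcal{O}_\mu$ fixed by a maximal torus $H$ of $K$; because $K=Sp(2n,\mathbb{C})$ contains such an $H$ and each $K$-orbit in $G/B$ contains an $H$-fixed flag in the present setting, this is routine. One then studies the Zariski tangent space $T_x\bar{\mathcal{O}}_\pi$ through its $H$-weight decomposition: each edge $\{\mu,t\mu t\}$ of $BG_{\mu,\pi}$ should provide an $H$-stable curve in $\bar{\mathcal{O}}_\pi$ through $x$, and hence a distinct weight vector transverse to $T_x\mathcal{O}_\mu$. Adding the $\dim\mathcal{O}_\mu$ directions inside $\mathcal{O}_\mu$ to these $r(\pi)-r(\mu)$ transverse directions produces $\dim T_x\bar{\mathcal{O}}_\pi=\dim\bar{\mathcal{O}}_\pi$, forcing smoothness at $x$; $K$-equivariance of the smooth locus then propagates smoothness across all of $\mathcal{O}_\mu$.

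The principal obstacle is establishing that these weight vectors truly span the normal space to $\mathcal{O}_\mu$ in $\bar{\mathcal{O}}_\pi$, rather than merely injecting into it. In the classical Schubert setting this follows from Polo's theorem on the span of $T$-stable curve tangents, but in the $K$-orbit setting a substitute is required. I would proceed by induction on $r(\pi)-r(\mu)$: pick a cover $\nu$ of $\mu$ in $[\mu,\pi]$, invoke the Lemma above to transfer the degree hypothesis to an instance of the problem inside a smaller $I_{2n-2}$, and apply the inductive hypothesis to obtain smoothness along $\mathcal{O}_\nu$. Smoothness along $\mathcal{O}_\mu$ would then follow by analyzing the single codimension-one degeneration from $\mathcal{O}_\nu$ to $\mathcal{O}_\mu$, using the pattern-avoidance framework of Theorem 1 to rule out pathological local models.

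For the final assertion, the rationally singular locus of $\bar{\mathcal{O}}_\pi$ is by definition the union of the orbits $\mathcal{O}_\nu\subseteq\bar{\mathcal{O}}_\pi$ at which the degree of $\nu$ in $BG_{\nu,\pi}$ strictly exceeds $r(\pi)-r(\nu)$. The first part of the theorem identifies this locus with the genuine singular locus, so the two coincide.
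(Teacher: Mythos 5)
Your opening framing is sound: the hypothesis already gives rational smoothness at $\mathcal{O}_\mu$ via Hultman, and the task is indeed to upgrade this to genuine smoothness. You also correctly identify the crux — one must show that the tangent directions coming from the $r(\pi)-r(\mu)$ edges of $BG_{\mu,\pi}$ actually \emph{span} the normal space to $\mathcal{O}_\mu$ inside $\bar{\mathcal O}_\pi$, and that the Polo-type machinery available for Schubert varieties is not obviously available here. But the substitute you propose does not close this gap, and that is where the argument breaks down.

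Your proposed induction has several problems. The Lemma of Section~3 transfers \emph{irregularity} upward from $I_{2n-2}$ to $I_{2n}$; its contrapositive would transfer regularity downward, but only when there is a transposition flipped by both $\mu$ and $\pi$, which need not exist, and even then there is no geometric morphism between the orbit closures in $SL_{2n}/B$ and $SL_{2n-2}/B$ that would let a smoothness statement in the smaller case propagate to the larger one (pattern embedding is purely combinatorial). Moreover your inductive step conflates two distinct operations: picking a cover $\nu$ of $\mu$ in $[\mu,\pi]$ (which stays in $I_{2n}$) with the Lemma's passage to $I_{2n-2}$. Finally, "analyzing the single codimension-one degeneration from $\mathcal{O}_\nu$ to $\mathcal{O}_\mu$, using the pattern-avoidance framework of Theorem~1 to rule out pathological local models" is not a concrete argument — Theorem~1 is a global rational-smoothness criterion for the full closure and does not say anything about local models of a codimension-one degeneration. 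What the paper actually does is entirely different and avoids these issues: it builds an explicit slice (in the sense of Brion) to $\mathcal{O}_\mu$ at a well-chosen flag, coordinatized by entries of a skew-symmetric Gram matrix, and then shows that the $n^2-n-r(\pi)$ non-edges of $BG_{\mu,\pi}$ at $\mu$ yield $n^2-n-r(\pi)$ minors of the Gram matrix vanishing on $\bar{\mathcal O}_\pi\cap\mathcal{F}_\mu$, each involving a distinct coordinate linearly with coefficient $\pm1$. The Jacobian criterion then gives smoothness directly, with no appeal to $T$-stable curves or to induction. Your last paragraph (singular locus equals rationally singular locus) is fine once the first part is established, modulo the slight imprecision that the identification of the rationally singular locus with the degree criterion is Hultman's theorem rather than a definition.
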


\begin{proof}
Assume first that $\mathcal O_\mu = \mathcal O_c$, the closed orbit.  Fix a basis $(e_i)$ of $\mathbb C^{2n}$ such that

\[
 \langle e_i,e_j\rangle = \begin{cases} 1 & \text{if $i<j,i+j = 2n+1$}\\
-1 & \text{if $i>j,i+j = 2n+1$}\\
0 & \text{otherwise}
\end{cases}
\]

\noindent where $\langle\cdot,\cdot\rangle$ is the skew form.  Let $(a_{ij})$ be a family of complex parameters indexed by ordered pairs $(i,j)$ satisfying either $i\le n<j$ or $n<i<j$.  We assume that $a_{ij} = -a_{2n+1-j,2n+1-i}$  and $a_{i,2n+1-i} = 0$ if $i\le n$ but otherwise put no restrictions on the $a_{ij}$.  Define a basis $(b_i)$ of $\mathbb C^{2n}$ via 

\[ b_i = \begin{cases} e_i + \displaystyle\sum_{j=n+1}^{2n} a_{ij} e_j & \text{if $i\le n$}\\
e_i + \displaystyle\sum_{j=i+1}^{2n} a_{ij} e_j & \text{otherwise}
\end{cases}
\]

\noindent Then the Gram matrix $G := (g_{ij} = (\langle b_i,b_j\rangle))$ of the $b_i$ relative to the form satisfies
\[
 g_{ij} = \begin{cases} 2a_{i,2n+1-j} & \text{if $i<j\le n$}\\
-g_{ji} & \text{if $j<i\le n$}\\
a_{j,2n+1-i} & \text{if $i<n<j<2n+1-i$}\\
1 & \text{if $i<n<j= 2n+1-i$}\\
-g_{ji} & \text{if $j<n<i$}\\
0 & \text{otherwise}
\end{cases}
\]
\noindent Thus the matrix $G$ is skew-symmetric and has zeroes below the antidiagonal from lower left to upper right.  The antidiagonal entries are all $\pm1$.  Now one checks that the set $\mathcal F$ of all flags $V_0\subset\ldots\subset V_{2n}$ where $(b_i)$ runs through all bases obtained as above from the $a_{ij}$ and $V_i$ is the span of $b_1,\ldots b_i$ is a slice in the sense of Brion to $\mathcal O_c$ at the flag $f_c$ corresponding to the basis $(e_i)$ \cite[2.1]{Br99}.  Intersecting $\mathcal F$ with $\bar{\mathcal O}_\pi$ we get another slice to $\mathcal O_c$ at $f_c$.  

By hypothesis there are $n^2 - n - r(\pi)$ distinct conjugates $c=t w_0 t$ of $w_0$ by a transposition $t$ such that $c\not\le\pi$.  One computes that $d:=tw_0$ also satisfies $d\not\le\pi$.  Writing $d$ as $d_1\ldots d_{2n}$, let $i$ be the smallest index such that $\pi_1\ldots\pi_i\not\le d_1\ldots d_i$ in the standard partial order on sequences used to characterize the Bruhat order \cite{P82}.  Thus if $\pi_1\ldots\pi_i$ is rearranged in increasing order as $\pi'_i\ldots\pi'_i$ and similarly $d_1\ldots d_i$ is rearranged as $d'_1\ldots d'_i$, then $\pi'_j>d'_j$ for some $j$.  Then for some $k$ there are more indices $\ell\le i$ with $d_\ell<k$ than indices $m\le i$ with $\pi_m<k$.  Equating all minors of the appropriate size lying in the first $i$ rows and columns $d_1,\ldots ,d_i$ of the Gram matrix to 0, we arrive at $n^2 - n -r(\pi)$ polynomials vanishing on $\bar{\mathcal O}_\pi\cap\mathcal F$, each involving a distinct variable raised to the first power with coefficient $\pm1$.  Then the Jacobian matrix of these polynomials has rank $n^2 - n - r(\pi)$, whence by the Jacobian criterion both $\mathcal F\cap\bar{\mathcal O}_\pi$ and $\bar{\mathcal O}_\pi$ are smooth at $\mathcal O_c$, as desired \cite[2.1]{Br99}

If $\mathcal O_c$ is replaced by any orbit $\mathcal O_\mu\subset\bar{\mathcal O}_\pi$, then let $G_\mu$ be the matrix whose $ij$-entry is 1 if $j=\mu_i>i, -1$ if $j=\mu_i<i$, and 0 otherwise.  This is the Gram matrix of a basis $(b_i)$ obtained by suitably rearranging the basis $(e_i)$; let $f_\mu$ be the corresponding flag.  Now consider the set of all Gram matrices $G$ whose $ij$-entries agree with those of $G_\mu$ if $j\ge\mu_i$ and whose other possibly nonzero entries are determined as follows.  There are $n^2 - n - r(\mu)$ conjugates $c$ of $\mu$ by transpositions with $c>\mu$.  Write each $c$ as $c_1\ldots c_{2n}$ and let $i$ be the smallest index with $c_i<\mu_i$.  Then the other possibly nonzero entries of $G$ appear in the positions $(i,c_i)$ together with their transposes $(c_i,i)$.  Entries of $G$ not in one of the positions specified above are 0.  There are no further restrictions on these entries apart of course from being skew-symmetric.  This set of Gram matrices stands in bijection to a set $\mathcal F_\mu$ of flags which is a slice to $\bar{\mathcal O}_\mu$ at $f_\mu$.  Then one shows as above that if the degree of $\mu$ in $BG_{\mu,\pi}$ equals the difference $r(\pi) - r(\mu)$, then there are $n^2 -n - r(\pi)$ polynomials vanishing on $\bar{\mathcal O}_\pi\cap\mathcal F_\mu$ whose Jacobian matrix has rank $n^2 -n - r(\pi)$, whence again $\bar{\mathcal O}_\pi$ is smooth along $\mathcal O_\mu$, as desired.
\end{proof}

There are two other symmetric pairs $(G,K)$ of complex reductive groups satisfying the hypothesis of \cite{H09} (that all Cartan subgroups in the corresponding real form $G_0$ of $G$ are conjugate), namely $(Spin(2n,\mathbb C),Spin(2n-1,\mathbb C))$ and $(E_6,F_4)$.  In the first case all $K$-orbits in $G/B$ have smooth closure.  In the second case, eleven out of the forty-five $K$-orbits have rationally singular closure.  Hultman has checked in each case that the bottom vertex of the Bruhat graph detects the rational singularity.  It is not known whether smoothness and rational smoothness are equivalent for these orbit closures.

\end{document}